\documentclass[12pt]{article}
\usepackage[left=1in, right=1in, top=1in, bottom=1in]{geometry}
\usepackage[font=small,labelfont=bf,width=0.8\textwidth,skip=16pt]{caption}
\usepackage{graphicx}
\usepackage{here} %\begin{figure}[H] and \begin{table}[H] to put it HERE
\usepackage{bbm} %\mathbbm like mathbb but works for numbers too
\usepackage{color,latexsym,amsfonts,amssymb,amsmath,mathbbol,bm,hyperref}
\usepackage{amsthm}
\usepackage{verbatim}
\usepackage{mathrsfs}
\usepackage{fancyhdr}
\usepackage{bbold}
%\usepackage{natbib}
%\usepackage{showlabels}

 % more vertical padding in tables
\addtolength{\tabcolsep}{4pt} % more horizontal padding in tables
\numberwithin{figure}{section}
\numberwithin{table}{section}
\numberwithin{equation}{section}

\newcommand{\E}{\mathbb E}

\usepackage{amsthm}
\newtheoremstyle{mythm}{18pt}{0pt}{\itshape}{}{\bfseries}{.}{12pt}{}
\newtheoremstyle{mydefn}{18pt}{0pt}{}{}{\bfseries}{.}{12pt}{}
\theoremstyle{mythm}
\newtheorem{theorem}{Theorem}[section]
\newtheorem{lemma}[theorem]{Lemma}

\theoremstyle{mydefn}
\newtheorem{remark}[theorem]{Remark}

\def\Pr{\mathbb{P}}

\newcommand{\Pn}{{\rm Poisson}}

\def\Ref#1{(\ref{#1})}
\parindent=0cm
\parskip = \baselineskip
%\allowdisplaybreaks
\newcommand{\eqs}{\begin{eqnarray*}}
\newcommand{\ens}{\end{eqnarray*}}

%%%%

\def\l{\lambda}

\newcommand{\beas}{\begin{eqnarray*}}
\newcommand{\enas}{\end{eqnarray*}}

\newcommand{\eqa}{\begin{eqnarray}}
\newcommand{\ena}{\end{eqnarray}}
\newcommand{\eq}{\begin{equation}}
\newcommand{\en}{\end{equation}}

\def\ignore#1{}

\def\Po{{\rm Pn}}

\def\l{\lambda}

\def\ed{\stackrel d=}

%%%%%%
%%%%%%%
\makeatletter
\renewcommand\theequation{\thesection.\@arabic\c@equation}
\makeatother
%%%%%%%

\usepackage{color,graphicx,mathbbol, enumerate}

\def\ind{\bm{1}}  
\def\E{\mathbb{E}} 

\def\Pr{\mathbb{P}}

\def\Sk{\rm {Sk}}
\def\tP{\mathbf{P}}
\def\tN{\mathbf{N}}
\def\tX{\mathbf{X}}
\def\tf{\mathbf{f}}
\def\tY{\mathbf{Y}}
\def\hU{\hat U}
\def\hV{\hat V}
\def\hUi{\hat U^{(i)}}
\def\hVi{\hat V^{(i)}}
\def\sumin{\sum_{i=1}^n}

\def\Ref#1{(\ref{#1})}

%%%%%

\fancyhf{}
\lhead[\thepage]{\nouppercase{\leftmark}}
\rhead[\leftmark]{\thepage}
\allowdisplaybreaks

\begin{document}
\parindent 0cm
\parskip .5cm
\title{Approximation of the difference of two Poisson-like counts by Skellam}
\author{H.~L.~Gan\footnote{Mathematics Department,
Northwestern University,
2033 Sheridan Road,
Evanston, IL 60208, USA. E-mail: ganhl@math.northwestern.edu} \and Eric D.~Kolaczyk\footnote{Department of Mathematics and Statistics,
Boston University,
111 Cummington Mall,
Boston, MA 02215, USA. E-mail: kolaczyk@bu.edu}}
\maketitle
\begin{abstract}
Poisson-like behavior for event count data is ubiquitous in nature.  At the same time, differencing of such counts arises in the course of data processing in a variety of areas of application.  As a result, the Skellam distribution -- defined as the distribution of the difference of two independent Poisson random variables -- is a natural candidate for approximating the difference of Poisson-like event counts.  However, in many contexts strict independence, whether between counts or among events within counts, is not a tenable assumption.  Here we characterize the accuracy in approximating the difference of Poisson-like counts by a Skellam random variable.  Our results fully generalize existing, more limited results in this direction and, at the same time, our derivations are significantly more concise and elegant.  We illustrate the potential impact of these results in the context of problems from network analysis and image processing, where various forms of weak dependence can be expected.
\end{abstract}
\vskip12pt \noindent\textit {Keywords and phrases\/}: Skellam approximation, Stein's method, Poisson approximation.

\vskip12pt \noindent\textit {AMS 2010 Subject Classification\/}: Primary: 62E17, Secondary: 60F05, 60J27

\section{Introduction}
Given two independent Poisson random variables $X$ and $Y$ with means $\l_1$ and $\l_2$, the Skellam distribution, originally attributed to~\cite{S46}, is defined as the distribution of the difference of $X$ and $Y$.  Formally, a random variable $W$ defined on the integers is said to have Skellam distribution with parameters $\l_1, \l_2>0$, which we will denote by $\text{Sk}(\l_1, \l_2)$, if for all $k \in \mathbb{Z}$,
\begin{align}
\Pr(W = k) = e^{-(\l_1 + \l_2)} \left( \sqrt{\frac{\l_1}{\l_2}}\right)^k I_k(2 \sqrt{\l_1\l_2}),
\end{align}
where $I_k(2 \sqrt{\l_1\l_2})$ denotes the modified Bessel function of the first kind with index $k$ and argument $2 \sqrt{\l_1\l_2}$.

In light of the ubiquity of Poisson-like behavior in nature and the ease with which differencing can arise in data processing, it is perhaps no surprise that the Skellam distribution has seen use in a variety of areas of application.  These include application to neural decoding in computational neuroscience~\cite{SH10}, denoising~\cite{HW12} and edge detection~\cite{HKK07} in image processing, conservation laws in particle physics (e.g., \cite{BKS13,MFRS13}), x-ray fluoroscopy in radiology~\cite{CBCRP13}, and the identification of genetic variants in bioinformatics~\cite{AWQ11}.  Most recently, the Skellam distribution has been found to have a role in network analysis~\cite{BKV14}.

In each of these contexts, there are two categories of events being counted and the resulting sums (i.e., denoted $X$ and $Y$ above) are then differenced.  The counting, of course, motivates use of the Poisson distribution in modeling.  The events being counted might be the spiking of neurons in two areas of the brain, the arrival of particles in two adjacent detectors in an array, the genetic variants in two nearby regions of the genome, or the presence / absence of a given subgraph across subsets of nodes in a network.  Ideally, indicators of these events are independent, both within each type of event category and across the two categories.  Independence within is ideal for arguing a Poisson approximation to the counts in each of the two event categories (i.e., in arguing Poisson approximations to the distributions of each of $X$ and $Y$).  At the same time, strictly speaking, independence across the two categories would seem to be necessary, as it is inherent to the definition of the Skellam distribution (i.e., the distribution of $X-Y$).

However, just as it is known that a Poisson approximation to event counts can be accurate under various forms of weak dependence, it is natural to expect that the \emph{difference} of Poisson-like counts might be similarly well-approximated under some form of weak dependence.  If the events are \emph{dependent within} each category but \emph{independent between} categories, then formal results of this nature follow from trivial extension of existing results for Poisson approximation.  On the other hand, if events are \emph{dependent between} categories, then such results are not immediate.

Motivated by the problem of subgraph counting in noisy networks, where it was noted that such complex dependencies can arise easily, \cite{BKV14} initiated work on such a general Skellam approximation using Stein's method.  However, the results provided in~\cite{BKV14} are limited, in that the bounds for the Stein factors therein were derived using a purely analytic approach for the Kolmogorov metric and were restricted to the case where $\l_1 = \l_2$. In pursuing the same problem of general Skellam approximation here, also using Stein's method, our approach in this paper will use the so-called probabilistic method by exploiting properties of generators of Markov processes, in contrast to the direct analytic approach used in \cite{BKV14}. The main advantages of our approach here are that we can derive bounds for the more general case $\lambda_1 \neq \lambda_2$, and that the proofs via this approach are significantly easier to derive.

The importance of our work is fundamental in nature, yet it has the potential to be wide-ranging in its practical impact.  In each of the application domains described above there is the very real possibility of general weak dependence among event counts (i.e., both within and between categories).  For example, dependencies arise naturally when counting subgraphs in noisy networks, either through dependency in the measurements underlying the construction of the network in the first place or through overlap of vertex subsets while counting~\cite{BKV14}.  Alternatively, dependency can be expected in particle counts obtained by the types of charge-coupled device (CCD) imaging instruments commonly used in astrophysics, due to so-called spillover effects (e.g., \cite{TBGN00,ASNG15}).  We will expand more on both of these examples in a later section.

There is by now, of course, a large and rich literature on the use of Stein's method to characterize accuracy of Poisson approximation to event counts, see~\cite{BHJ92} for a monograph on the topic.  However, the focus of this paper is on approximating the \emph{difference} of two Poisson-like counts, which to our knowledge is yet to be studied in depth other than the work of~\cite{BKV14}.  In~\cite{BKV14}, the focus was on approximating the distribution of what were termed `noisy' subgraph counts, i.e., subgraph counts in graphs wherein our knowledge of the presence / absence status of edges among vertex pairs is uncertain.  There the focus was on a centered version of such counts, which was found upon manipulation to yield a difference of two Poisson-like sums and, hence, motivated approximation by Skellam.  We use a simple version of the same type of problem as one of two illustrations of our results later in this paper.  Nevertheless, as also pointed out by~\cite{BKV14}, the use of Stein's method for noisy graphs is different from that used traditionally for random graphs.  Stein's method was first introduced to approximation theory for random graphs in~\cite{B82}, wherein both Poisson and Normal approximation results for isolated trees in random graphs were derived. The results for the Normal case were expanded in~\cite{BKR89} to a variety of applications such as subgraph counts and the number of isolated vertices. For summaries of Stein's method results for random graphs, see~\cite{BHJ92, JLR11}, particularly the former for Poisson approximation results that are more relevant to the work in this paper for obvious reasons.  

This paper is organized as follows. In Section~2 we construct our framework for Stein's method for the Skellam distribution, and derive bounds for the relevant Stein factors. In Section~3 we utilise each of these bounds in two example applications: counting subgraphs in noisy networks and counting particles in imaging. Both examples are relatively simple but can be easily generalised. The paper concludes with a discussion of our results and some open questions in Section~4.

%In Section~2 we summarize our main results.  In Section~3, we will prove Theorem~\ref{steinfactorthm}, wherein we characterize the accuracy of approximation of the difference of two Poisson-like counts by Skellam, in the form of two bounds. In Section~4 we will utilise each of these bounds in two example applications:  counting subgraphs in noisy networks and counting particles in imaging.  Both examples are relatively simple but can be easily generalised.

\section{Stein's method for the Skellam distribution}

Our results are derived using an adaptation of multivariate Poisson approximation. While the Skellam distribution is univariate, the objects we are typically interested in approximating with the Skellam distribution are differences of two random variables. Our approach reflects this by initially considering bivariate Poisson approximation and then choosing test functions that project down to the univariate case appropriately.

We begin by noting the bivariate Poisson Stein identity. Note that $(X,Y)$ are said to be bivariate Poisson with parameters $(\l_1, \l_2)$ if $X$ and $Y$ have marginal distributions $\Pn(\l_1)$ and $\Pn(\l_2)$ and are independent.
\begin{lemma}[Bivariate Poisson Stein identity]
$(X,Y)$ is a bivariate Poisson distributed random vector with parameters $(\l_1, \l_2)$ if and only if for all functions $h$ in a family of suitable functions, $\E(\mathcal A h(X,Y)) = 0$, where
\begin{align}
\mathcal A h(x,y) &= \lambda_1[h(x+1, y) - h(x,y)] + x[h(x-1, y) - h(x,y)]\notag\\
	&\ \ \ + \lambda_2[h(x,y+1) - h(x,y)] + y[h(x, y-1) - h(x,y)].\label{2dimgen}
\end{align}
\end{lemma}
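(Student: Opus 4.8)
The plan is to prove the "if and only if" statement by establishing the two directions separately, with the generator structure of the operator $\mathcal A$ as the central tool. The key observation is that $\mathcal A$ in \Ref{2dimgen} is precisely the generator of a bivariate immigration–death (birth–death) Markov process: the first two terms describe an $M/M/\infty$-type queue in the $x$-coordinate with immigration rate $\l_1$ and unit per-capita death rate, and the last two terms describe an independent copy in the $y$-coordinate with immigration rate $\l_2$. Since the product of $\Pn(\l_1)$ and $\Pn(\l_2)$ is the unique stationary distribution of each one-dimensional immigration–death chain, it is the stationary distribution of the product process with generator $\mathcal A$.

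\textbf{The forward direction} ($(X,Y)$ bivariate Poisson $\Rightarrow$ $\E(\mathcal A h(X,Y))=0$) is the routine one. First I would take $(X,Y)$ with $X\sim\Pn(\l_1)$, $Y\sim\Pn(\l_2)$ independent, and compute $\E(\mathcal A h(X,Y))$ directly by summing against the product pmf. The standard computation uses the Poisson identity $\l\,\Pr(X=x)=(x+1)\Pr(X=x+1)$, or equivalently $\E[\l f(X+1)]=\E[X f(X)]$ for the $M/M/\infty$ generator, applied separately in each coordinate after conditioning on the other. The two immigration terms cancel against the two death terms coordinatewise, giving $\E(\mathcal A h(X,Y))=0$. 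One only needs to track integrability: the "family of suitable functions" should be those $h$ for which all four expectations are finite (e.g.\ $h$ of at most polynomial growth), so I would state this mild regularity condition explicitly rather than leave it implicit.

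\textbf{The converse direction} ($\E(\mathcal A h(X,Y))=0$ for all suitable $h$ $\Rightarrow$ $(X,Y)$ bivariate Poisson) is where the real content lies, and the cleanest route is to plug in a well-chosen family of test functions. I would take indicator test functions $h(x,y)=\ind\{x=j,\ y=k\}$ for each fixed $(j,k)$. Writing $p_{j,k}=\Pr(X=j,Y=k)$, the equation $\E(\mathcal A h)=0$ becomes a linear recurrence relating the probabilities at $(j,k)$ to those at its lattice neighbors, namely the detailed global-balance equation of the immigration–death chain evaluated at state $(j,k)$. The system of these recurrences over all $(j,k)\in\N^2$, together with the normalization $\sum p_{j,k}=1$, pins down the $p_{j,k}$ uniquely, and one verifies the product Poisson form $p_{j,k}=e^{-(\l_1+\l_2)}\l_1^j\l_2^k/(j!\,k!)$ satisfies it; uniqueness of the stationary distribution of this positive-recurrent chain then forces $(X,Y)$ to be bivariate Poisson.

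\textbf{The main obstacle} I anticipate is the uniqueness argument in the converse: one must argue that the balance equations plus normalization admit a \emph{unique} probability solution. The fastest rigorous justification is to invoke that $\mathcal A$ generates an irreducible, positive-recurrent Markov process on $\N^2$ whose unique invariant measure is the product Poisson law, so any distribution annihilated by $\mathcal A$ against all suitable test functions must be that invariant law. Alternatively, to keep the argument self-contained and avoid heavy Markov-process machinery, I would solve the recurrence directly: the boundary equations at $j=0$ or $k=0$ express each $p_{j,k}$ in terms of $p_{0,0}$ by induction on $j+k$, and normalization fixes $p_{0,0}=e^{-(\l_1+\l_2)}$. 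Either way the step is standard, so the proof stays short, consistent with the paper's emphasis on concise derivations.
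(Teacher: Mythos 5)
The paper does not actually prove this lemma: it is stated and immediately deferred to the cited references \cite{B88, B05}, so your proposal can only be compared against the standard generator-method justification given there. Your main line of argument is precisely that justification, and it is correct: $\mathcal A$ in \Ref{2dimgen} is the generator of two independent immigration--death processes; the forward direction follows coordinatewise from the Poisson identity $\E[\l g(X+1)]=\E[X g(X)]$ applied after conditioning on the other coordinate; and the converse follows because the chain on $\N^2$ is irreducible and, the rates being constant immigration plus linear per-capita death, non-explosive and positive recurrent, hence has a unique invariant distribution, namely the product of $\Pn(\l_1)$ and $\Pn(\l_2)$. One point you should make explicit rather than treat as decoration: for a continuous-time chain, a probability measure annihilating the generator (equivalently, solving $\pi Q=0$) is invariant only when the chain is non-explosive, so the non-explosiveness of this particular chain is a needed ingredient of the converse, not merely background.

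There is, however, a genuine gap in your ``self-contained'' fallback. Testing against $h(x,y)=\ind\{(x,y)=(j,k)\}$ does give the global balance equations, but induction on $j+k$ does not go through as you describe: the equation at $(0,0)$ reads $p_{1,0}+p_{0,1}=(\l_1+\l_2)p_{0,0}$, which determines only the \emph{sum} $p_{1,0}+p_{0,1}$, so already at the first step you cannot express each $p_{j,k}$ in terms of $p_{0,0}$. In one dimension, global balance telescopes to detailed balance, which is what makes the analogous induction work for a birth--death chain; in two dimensions no such telescoping is available, and detailed balance is a property of the invariant law (by reversibility), not an automatic consequence of global balance that you may assume while solving the system. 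If you want an elementary converse that avoids Markov-process machinery, a route that does close is to test against $h(x,y)=z_1^x z_2^y$ with $z_1,z_2\in[0,1]$ (these are bounded, hence suitable). Writing $\phi(z_1,z_2)=\E[z_1^X z_2^Y]$, the condition $\E\,\mathcal A h(X,Y)=0$ becomes $(1-z_1)\left(\partial_{z_1}\phi-\l_1\phi\right)+(1-z_2)\left(\partial_{z_2}\phi-\l_2\phi\right)=0$; substituting $\psi=\phi\, e^{-\l_1 z_1-\l_2 z_2}$ reduces this to $(1-z_1)\partial_{z_1}\psi+(1-z_2)\partial_{z_2}\psi=0$, whose characteristic curves all flow into $(1,1)$, so continuity of the probability generating function on $[0,1]^2$ forces $\psi\equiv\psi(1,1)=e^{-\l_1-\l_2}$ and hence $\phi(z_1,z_2)=e^{\l_1(z_1-1)+\l_2(z_2-1)}$, which is the product Poisson form.
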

Details about multivariate Poisson approximation via Stein's method can be found in \cite{B88, B05}. For the Skellam distribution, we seek to modify bivariate Poisson approximation by considering test functions that depend only upon the difference between $X$ and $Y$. Noting that we will be abusing notation slightly by often writing bivariate functions that depend only upon the difference as a univariate function, for example $f(x,y) = f(x-y)$, for any function $f$ we define the Stein equation where we set  $h_f(x,y) =: h(x,y)$ by
% I'm not 100% sure this Skellam Stein identity is correct, my gut says it's fine, but we don't actually need it anyway.
%
%\begin{lemma}[Skellam Stein identity]
%$Z := X-Y$ is a Skellam distributed random variable with parameters $(\l_1, \l_2)$ if and only if for all functions $h(x-y):=h(x,y)$ from a sufficiently rich family of functions, $\E(\mathcal A h(X,Y)) = 0$, where
%\begin{align}
%\mathcal A h(x-y) &= \lambda_1[h((x+1)- y) - h(x-y)] + x[h((x-1)- y) - h(x-y)]\notag\\
%	&\ \ \ + \lambda_2[h(x-(y+1)) - h(x-y)] + y[h(x- (y-1)) - h(x-y)].\label{2dimgen}
%\end{align}
%\end{lemma}
%To apply Stein's method, we need to find bounds on differences of the solution to the Stein equation. That is we need to find bounds for $h_f(x-y) =: h(x-y)$ that  for any function $f$ solves
\begin{align}
\mathcal A h_f(x,y) = f(x-y) - \text{Sk}(\l_1, \l_2)\{f\},\label{Skellamsteineq}
\end{align}
where $\text{Sk}(\l_1, \l_2)\{f\} := \E f(Z)$ and $Z \ed \text{Skellam}(\l_1,\l_2)$. Hence by taking expectations it is sufficient to find a uniform bound for $\E \mathcal A h_f(X,Y)$ to bound $\E f(X-Y) - \text{Sk}(\l_1,\l_2)\{f\}$ for any $f$. We will consider all $f$ from the family of test functions corresponding to indicator functions on the difference of the two coordinates, which encapsulates total variation distance. That is $\mathcal{F}_{TV} = \{f: f(x,y) = \bm{1}_A(x-y), A \subset \mathbb Z \}$.

Let $\Delta_i h(x,y) = h( (x,y) + \mathbf{e}^{(i)}) - h(x,y)$ where $\mathbf{e}^{(i)}$ denotes a unit vector in coordinate $i$ for $i \in \{1,2\}$. Also let $\Delta^2_{ij}h(x,y) = \Delta_i(\Delta_j h(x,y))$ where $j \in \{1,2\}$ also. To apply Stein's method successfully, bounds of the right order are required for the Stein factors,
\begin{align*}
\| \Delta_{i}h \| &= \sup_{f \in \mathcal{F}_{TV}}\sup_{x,y} | \Delta_{i} h(x,y) | \\%=\sup_{f \in \mathcal{F}_{TV}}\sup_{x,y} | h(x+1,y) - h(x,y) |, \\
\| \Delta_{ij}^2 h\| &:= \sup_{f \in \mathcal{F}_{TV}} \sup_{x,y}| \Delta_{ij}^2 h(x,y)|\\ 
&= \sup_{f \in \mathcal{F}_{TV}} \sup_{x,y}\left| h((x,y)+\mathbf{e}^{(i)} + \mathbf{e}^{(j)})  - h((x,y)+ \mathbf{e}^{(i)}) - h((x,y)+ \mathbf{e}^{(j)}) + h(x,y) \right|.
\end{align*}

\begin{theorem}\label{steinfactorthm} 
For $i, j \in \{1,2\}$, 
\begin{align}
\| \Delta_i h \| &\leq \min \left\{ 1, \sqrt{ \frac{2}{e \cdot \max\{\l_1, \l_2\}}}\right\},\label{firstdiff}\\
\| \Delta_{ij}^2h\| &\leq \min \left\{ 1,  \frac{1}{2\max\{\l_1,\l_2\}^2} + \frac{\sqrt2\log^+(\sqrt2 \max\{\l_1,\l_2\})}{\max\{\l_1,\l_2\}}\right\},\label{seconddiff}
\end{align}
where $\log^+(x) = \max\{\log(x), 0\}$. Furthermore,
\begin{align}
\| \Delta_i h\| \leq \int_0^\infty e^{-t} \max\left\{ 1, e^{-(\l_1 + \l_2)(1-e^{-t})} I_0((\l_1 + \l_2)(1-e^{-t})) \right\} dt \sim \sqrt{\frac{2}{\pi(\l_1 + \l_2)}}, \label{firstdiff2}
\end{align}
where the asymptotic equivalence is for when both $\l_1, \l_2$ are large.
\end{theorem}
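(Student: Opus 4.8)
My plan is to realize the solution of the Stein equation \eqref{Skellamsteineq} through the semigroup of the Markov process whose generator is $\mathcal A$ in \eqref{2dimgen}: a pair of \emph{independent} immigration--death (i.e.\ $M/M/\infty$) processes with immigration rates $\l_1,\l_2$ and unit per-capita death rate, whose stationary law is exactly bivariate Poisson$(\l_1,\l_2)$. Writing $g(x,y)=f(x-y)-\text{Sk}(\l_1,\l_2)\{f\}$, the bounded solution of \eqref{Skellamsteineq} is $h_f(x,y)=-\int_0^\infty \E_{(x,y)} g(X_t,Y_t)\,dt$, the integral converging because $\E_{(x,y)} g(X_t,Y_t)$ decays exponentially (the contribution of the initial particles dies out at rate $e^{-t}$, and the stationary mean of $g$ is zero by definition of $\text{Sk}(\l_1,\l_2)\{f\}$). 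This representation is common to all three bounds, so I would set it up once.

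The next step is a first-difference formula via coupling. Using the thinning representation $X_t\mid X_0=x \eqd \mathrm{Bin}(x,e^{-t})+\mathrm{Pn}(\l_1(1-e^{-t}))$ (and likewise for $Y$), I would couple the chains started from $(x+1,y)$ and $(x,y)$ so that they agree except for the fate of the single extra $X$-particle, which is still alive at time $t$ with probability $e^{-t}$. Since $g$ depends only on the difference of the coordinates, this yields
\[
\Delta_1 h_f(x,y)=-\int_0^\infty e^{-t}\,\E\big[f(W_t+1)-f(W_t)\big]\,dt,\qquad W_t=B_1-B_2+S_t,
\]
where $B_1\sim\mathrm{Bin}(x,e^{-t})$, $B_2\sim\mathrm{Bin}(y,e^{-t})$ and $S_t\sim\text{Skellam}(\l_1(1-e^{-t}),\l_2(1-e^{-t}))$ are independent; the case $i=2$ is identical after replacing $+1$ by $-1$. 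For $f=\ind_A$ the inner expectation is at most $d_{TV}(W_t,W_t+1)$, and since convolution is a contraction for the unit shift this is at most $d_{TV}(S_t,S_t+1)$. Because the Skellam law is a convolution of two (reflected) Poisson laws it is log-concave, hence unimodal, so a telescoping argument gives the exact identity $d_{TV}(S_t,S_t+1)=\max_k \Pr(S_t=k)$.

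For the sharp bound \eqref{firstdiff2} the point is to control this maximal point mass by a quantity depending on $\l_1,\l_2$ only through their sum. Here I would use Fourier inversion, $\Pr(S_t=k)=\frac1{2\pi}\int_{-\pi}^{\pi}e^{-ik\theta}\phi_t(\theta)\,d\theta$ with $\phi_t(\theta)=\E e^{i\theta S_t}$, together with the crucial observation that $|\phi_t(\theta)|=\exp\{(\l_1+\l_2)(1-e^{-t})(\cos\theta-1)\}$ depends on the two rates only through $\l_1+\l_2$. Combining this with the integral representation $I_0(z)=\frac1{2\pi}\int_{-\pi}^{\pi}e^{z\cos\theta}\,d\theta$ gives $\max_k\Pr(S_t=k)\le e^{-\mu_t}I_0(\mu_t)$ with $\mu_t=(\l_1+\l_2)(1-e^{-t})$. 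Since $e^{-\mu}I_0(\mu)\le 1$ for all $\mu\ge 0$, combining this with the trivial bound $d_{TV}\le 1$ and inserting into the first-difference formula yields the integral in \eqref{firstdiff2}. (The analogous route to \eqref{firstdiff} instead contracts $d_{TV}(S_t,S_t+1)$ onto the single coordinate of larger rate and applies the Poisson mode bound $\max_k\Pr(\mathrm{Pn}(\nu)=k)\le (2e\nu)^{-1/2}$, after which the substitution $u=1-e^{-t}$ evaluates the integral.)

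The last step, and the place I expect the genuine work, is the asymptotic equivalence. After the substitution $u=1-e^{-t}$ the bound becomes $\int_0^1 e^{-\sigma u}I_0(\sigma u)\,du$ with $\sigma=\l_1+\l_2$, and I would insert the classical expansion $e^{-z}I_0(z)=(2\pi z)^{-1/2}(1+O(1/z))$. Splitting the integral at $u=1/\sigma$, the small-$u$ piece contributes $O(1/\sigma)$ (there the integrand is $\le 1$), while on $[1/\sigma,1]$ the leading term integrates to $\int_0^1 (2\pi\sigma u)^{-1/2}\,du=\sqrt{2/(\pi\sigma)}$ and the remainder is again $O(1/\sigma)$; hence the bound is $\sqrt{2/(\pi(\l_1+\l_2))}\,(1+o(1))$. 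The probabilistic steps (the contraction, unimodality, and the Fourier identity) are short and clean; the only real care is needed in making this uniform asymptotic evaluation rigorous, in particular handling the integrable singularity of $(2\pi\sigma u)^{-1/2}$ at $u=0$ alongside the breakdown of the Bessel expansion for small $\sigma u$.
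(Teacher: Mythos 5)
Your handling of the two first-difference bounds is correct and follows essentially the paper's own route: the same semigroup representation of the Stein solution, the same coupling in which the lone extra particle survives to time $t$ with probability $e^{-t}$ (the paper's $D_1(t)$), the Poisson mode bound $\max_k\Pr(\Po(\nu)=k)\le(2e\nu)^{-1/2}$ for \Ref{firstdiff}, and Fourier inversion together with $I_0(z)=\frac{1}{2\pi}\int_{-\pi}^{\pi}e^{z\cos\theta}\,d\theta$ for \Ref{firstdiff2}. Your contraction-plus-log-concavity identity $d_{TV}(S_t,S_t+1)=\max_k\Pr(S_t=k)$ is a slightly cleaner packaging of the paper's ``the worst-case indicator collects all the positive or all the negative first differences'' step, and your asymptotic evaluation of $\int_0^1 e^{-\sigma u}I_0(\sigma u)\,du$ is more careful than the paper's one-line appeal to $I_0(z)\sim e^z/\sqrt{2\pi z}$. (You also implicitly repair what is evidently a typo in the statement: since $e^{-z}I_0(z)\le1$ for all $z\ge0$, the $\max$ in \Ref{firstdiff2} must be read as a $\min$, exactly as your derivation produces.)

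The genuine gap is that your proposal never proves \Ref{seconddiff}: the second-difference bound is one of the three claims of the theorem and is nowhere addressed, and it is not a routine repetition of your first-difference argument. The coupling now carries two extra particles, so the integrand acquires the factor $e^{-2t}$ (both must survive, as in the paper's \Ref{part1}), and after conditioning on $Z_0^{\l_1}(t)$ one must control the $\ell^1$-norm of the \emph{second} differences of Poisson probabilities; your exact identity $d_{TV}(S,S+1)=\max_k p_k$ has no analogue here, and the trivial bound of $2$ gives no decay in the parameters. The paper's key new ingredient is the estimate, for $X\ed\Po(\l)$ with $p_k=\Pr(X=k)$,
\[
\sum_{k}|p_k-2p_{k-1}+p_{k-2}|=\frac{1}{\l^2}\,\E\,\big|\l^2-2X\l+X(X-1)\big|\le\frac{1}{\l^2}\sqrt{\E\big(\l^2-2X\l+X(X-1)\big)^2}=\frac{\sqrt2}{\l},
\]
a Cauchy--Schwarz bound on a Charlier-type polynomial. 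Inserting this into $\int_0^\infty e^{-2t}\min\left\{2,\,\sqrt2/\big(\max\{\l_1,\l_2\}(1-e^{-t})\big)\right\}dt$ is what produces the $\frac{1}{2\max\{\l_1,\l_2\}^2}+\frac{\sqrt2\log^+(\sqrt2\max\{\l_1,\l_2\})}{\max\{\l_1,\l_2\}}$ of \Ref{seconddiff}, including the logarithmic term, which nothing in your outline accounts for; the mixed case $\Delta^2_{12}h$ additionally needs the re-indexed conditioning noted after \Ref{part2}. As written, your proposal establishes \Ref{firstdiff} and \Ref{firstdiff2} but leaves a third of the theorem unproven.
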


%\section{Proof of Theorem~\ref{steinfactorthm}}

\begin{proof}%[Proof of Theorem~\ref{steinfactorthm}]
Our proof will follow similar ideas and techniques used in univariate Poisson approximation, for example Lemma 10.2.5~in \cite{BHJ92}. Note that we will prove the bounds in the case where $i = j = 1$, and the other cases follow essentially the same proof and hence are not included.

It can be shown that for any bounded function $f$, the (well-defined) solution to the Stein equation~\Ref{Skellamsteineq} is
\begin{align} h_f(x,y) = - \int_0^\infty\big[ \E f(Z_{x,y}(t)) - \text{Sk}(\l_1,\l_2)\{f\} \big]dt, \label{steinsol}\end{align}
where $Z_{x,y}(t)$ is a Markov process starting at $(x,y)$ and following generator~\Ref{2dimgen}. Hence,
\begin{align} 
\Delta_1 h(x,y) &= - \int_0^\infty \E \left[ f(Z_{x+1,y}(t)) - f(Z_{x,y}(t)) \right] dt\label{Dx},\\
\Delta_{11}^2h(x,y) &= - \int_0^\infty \E\left[ f(Z_{x+2,y}(t)) - f(Z_{x+1,y}(t)) - f(Z_{x+1,y}(t)) + f(Z_{x,y}(t)) \right]dt.\label{Dxx}
\end{align}
We will construct couplings by defining the following \emph{independent} processes:

\begin{tabular}{c l}
$D_1(t)$ & A pure death process with rate 1 and $D_1(0)=1$,\\
$D_2(t)$ & A pure death process with rate 1 and $D_2(0)=1$,\\
$D_x(t)$ & A pure death process with unit-per-capita death rate and $D_x(0) = x$,\\
$D_y(t)$ & A pure death process with unit-per-capita death rate and $D_y(0) = y$,\\
$Z^{\l_1}_0(t)$ & An immigration-death process with immigration rate $\l_1$,\\
	& unit-per-capita death rate and $Z^{\l_1}_0(0) = 0$,\\
$Z^{\l_2}_0(t)$  &An immigration-death process with immigration rate $\l_2$,\\
	& unit-per-capita death rate and $Z^{\l_2}_0(0) = 0$.\\
\end{tabular} 

We can define a coupling (see Theorem~2.1 of~\cite{B05} for more details) such that
\begin{align*}
Z_{x+1,y}(t) &:= Z_{x,y}(t) + (D_1(t),0),\\
Z_{x,y}(t) &:= (Z_0^{\l_1}(t),0) + (0,Z_0^{\l_2}(t)) + (D_x(t), 0) + (0, D_y(t)).
\end{align*}
Using this coupling, \Ref{Dx} now becomes
\begin{align*}
\Delta_1 h(x,y) = -\int _0^\infty e^{-t} &\E\big[ f(Z_0^{\l_1}(t) + D_x(t) - Z_0^{\l_2}(t) - D_y(t) + D_1(t))\\
	& - f(Z_0^{\l_1}(t) + D_x(t)  -Z_0^{\l_2}(t) - D_y(t))\big| \ind_{D_1(t)=1}\big] dt.
\end{align*}
Note that if $D_1(t)=0$ then the two terms in the expectation cancel out. Given $f \in \mathcal{F}_{TV}$, then as $f$ is either $0$ or $1$, the constant bound is immediate. For the $(\l_1, \l_2)$ dependent bound, the term in the expectation can be evaluated as% (where we are also still conditioning upon $D_1(t)=1$)
\begin{align}
&\sum_{k=0}^\infty \Big\{ \E[ f(k + D_x(t) - Z_0^{\l_2}(t) - D_y(t) + 1) | Z_0^{\l_1}(t) = k] \Pr(Z_0^{\l_1}(t)  = k\notag)\\
	&\ \ \ - \E[ f(k +1 + D_x(t) - Z_0^{\l_2}(t) - D_y(t)) | Z_0^{\l_1}(t) = k+1] \Pr(Z_0^{\l_1}(t) = k+1)\Big \}\notag \\
	&\ \ \ -  f(D_x(t) - Z_0^{\l_2}(t) - D_y(t)) \Pr(Z_0^{\l_1}(t) = 0\notag)\\
	&= \sum_{k=0}^\infty\E[ f(k +1 + D_x(t) - Z_0^{\l_2}(t) - D_y(t))] (\Pr(Z_0^{\l_1}(t) = k) - \Pr(Z_0^{\l_1}(t) = k+1)) \notag\\
		&\ \ \ -  f(D_x(t) - Z_0^{\l_2}(t) - D_y(t)) \Pr(Z_0^{\l_1}(t) = 0).\label{fbits}
\end{align}
Noting that it can be shown that $Z_0^{\l_1}(t) \ed \Po(\l_1(1-e^{-t}))$ (page~101 of~\cite{HPS86}), the above can be bounded by
\begin{align}
\sum_{k=0}^\infty& | \Pr(Z_0^{\l_1}(t) = k+1) - \Pr(Z_0^{\l_1}(t) = k)| + \Pr(Z_0^{\l_1}(t)= 0) \notag \\
&= 2 \max_{x\geq0}  \Pr(Z_0^{\l_1}(t) = x) \leq 2 \cdot \frac{1}{\sqrt{2e\l_1(1-e^{-t})}}\label{maxbound},
\end{align}
where the final bound on Poisson probabilities can be found in~\cite{BHJ92} (A.2.7). Recall that the functions $f$ under consideration are indicator functions on the real line. Now given that each of the first differences of the Poisson probabilities is multiplied by $f$ in~\Ref{fbits}, then the worst case for the function $f$ would be to include either all the positive or negative differences from $\Pr(Z_0^{\l_1}(t) = k) - \Pr(Z_0^{\l_1}(t) = k+1)$. As the bound in~\Ref{maxbound} contains both the positive and negative differences, we can drop a factor of 2 in our final bound.
\begin{align*}
| \Delta_1h(x,y) | &\leq \int_0^\infty e^{-t} \min\left\{ 1, \frac{1}{\sqrt{2e\l_1(1-e^{-t})}} \right\} dt\\
	&= \int_0^{-\log(1-\frac{1}{2e\l_1})} e^{-t} dt + \int_{-\log(1-\frac{1}{2e\l_1})}^\infty \frac{e^{-t}}{\sqrt{2e\l_1(1-e^{-t})}}dt
%		&= \frac{1}{2e\l_1} + \frac{2}{\sqrt{2e\l_1}} -\frac{2}{2e\l_1}\\
		= \sqrt{\frac{2}{e\l_1}} - \frac{1}{2e\l_1}.
\end{align*} 
The final result in \Ref{firstdiff} is achieved by noting that instead of conditioning upon $Z_0^{\l_1}(t)$ we could equally have conditioned upon $Z_0^{\l_2}(t)$ with the same corresponding final result.

For the second bound \Ref{firstdiff2}, instead of conditioning upon only $Z_0^{\l_1}(t)$, we will condition on both $Z_0^{\l_1}(t)$ and $Z_0^{\l_2}(t)$. Therefore similarly to earlier we need to bound
\begin{align}
\sum_{k=0}^\infty&\E[ f(k +1 + D_x(t) - D_y(t))] \left(\Pr(Z_0^{\l_1}(t) - Z_0^{\l_2}(t) = k) - \Pr(Z_0^{\l_1}(t) - Z_0^{\l_2}(t)= k+1)\right)\notag  \\
		&\ \ \ -  f(D_x(t) - D_y(t)) \Pr(Z_0^{\l_1}(t) -Z_0^{\l_2}(t) = 0),\label{twodiffs}
\end{align}
and hence we need a suitable bound for $\max_k \{\Pr(Z_0^{\l_1}(t) - Z_0^{\l_2}(t)=k)\}$. Recalling the distributions of $Z_0^{\l_1}(t)$ and $Z_0^{\l_2}(t)$, this boils down to finding a uniform bound for the maximum of a Skellam distribution. Using the characteristic function inversion formula,
\begin{align*} \text{Sk}(\l_1, \l_2)\{k\} &= \frac{1}{2\pi} \int_{-\pi}^\pi e^{-itk} e^{\l_1(e^{it} - 1)} e^{\l_2(e^{-it} - 1)} dt\\
	&\leq \frac{1}{2\pi} \int_{-\pi}^\pi e^{(\l_1 + \l_2)(\cos t - 1)} dt = e^{-(\l_1 + \l_2)}I_0(\l_1 + \l_2),
\end{align*}
where the last equality follows from~\cite{AS64} (9.6.19). The final bound in the theorem is now clear by starting with~\Ref{twodiffs}, following the same argument as for the bound which only depended upon $\l_1$, and then where a bound is required for $\max_{x \geq 0} \Pr(Z_0^{\l_1}(t) = x)$ in the earlier argument in~\Ref{maxbound}, use the above Skellam bound.  The asymptotic result can be derived from the fact that $I_0(z) \sim \frac{e^z}{\sqrt{2\pi z}}$ from~\cite{AS64} (9.7.1).

The bounds for the second difference are derived in a similar manner.
\begin{align}
\Delta_{11}^2h(x,y) = -\int_0^\infty e^{-2t}\E \Big[& f(Z_0^{\l_1}(t) - Z_0^{\l_2}(t) + D_x(t) - D_y(t) + D_1(t) + D_2(t))\notag\\
	& - f(Z_0^{\l_1}(t) - Z_0^{\l_2}(t) + D_x(t) - D_y(t)+ D_1(t))\notag\\
	& - f(Z_0^{\l_1}(t) - Z_0^{\l_2}(t) + D_x(t) - D_y(t)+ D_2(t))\notag\\
	& +f(Z_0^{\l_1}(t) - Z_0^{\l_2}(t) + D_x(t) - D_y(t))\Big| \ind_{D_1(t)=D_2(t)=1}\Big] dt.\label{part1}
\end{align}
Similarly to earlier, we have conditioned upon $D_1(t) =D_2(t) = 1$ in the above equation. Note that as $f\in \mathcal{F}_{TV}$, we can bound the expectation in the integral by 2. This immediately gives the first of the two bounds in the theorem.

We now work on a $(\l_1, \l_2)$ dependent bound in a similar fashion as for the first difference. Without loss of generality, assume that $\l_1 \geq \l_2$. The term in the expectation can be evaluated as follows,
\begin{align}
\sum_{k=-2}^\infty &\Big\{  \E[ f(k- Z_0^{\l_2}(t)+D_x(t) - D_y(t) + 2) | Z_0^{\l_1}(t) = k] \Pr (Z_0^{\l_1}(t) = k)\notag\\
	& - 2 \E [f(k+1-Z_0^{\l_2}(t) + D_x(t) - D_y(t) + 1) |  Z_0^{\l_1}(t) = k+1]\Pr(Z_0^{\l_1}(t)= k+1)\notag\\
	& + \E [f(k+2-Z_0^{\l_2}(t)+ D_x(t) - D_y(t) ) |  Z_0^{\l_1}(t) = k+2]\Pr(Z_0^{\l_1}(t) = k+2)\Big\}\notag\\
	&= \sum_{k=-2}^\infty \Big\{\E[ f(k+2 -Z_0^{\l_2}(t)+ D_x(t) - D_y(t))]\notag \\
	& \  \cdot(\Pr (Z_0^{\l_1}(t)  = k) - 2 \Pr (Z_0^{\l_1}(t) = k+1) + \Pr (Z_0^{\l_1}(t) = k+2)) \Big\}\label{part2}.
\end{align}
Note that for bounding $\Delta_{12}^2h(x,y)$ we modify this approach by conditioning on $Z_0^{\l_1}(t)$ being equal to $k, k-1, k+1, k$ respectively for the four terms in \Ref{part1}. The other cases follow by symmetry. Given $|f(x)| \leq 1$, the absolute value of the above is bounded by
\[ \sum_{k=-2}^\infty |\Pr (Z_0^{\l_1}(t) = k) - 2 \Pr (Z_0^{\l_1}(t)= k+1) + \Pr (Z_0^{\l_1}(t)= k+2)|, \]
which has a natural bound of 2. Recalling $Z_0^{\l_1}(t) \ed \Po(\l_1(1-e^{-t}))$, the above becomes a sum of second differences of Poisson probabilities. For $X \ed \Po(\lambda)$, 
\begin{align*}
\sum_{k=0}^\infty | p_k - 2p_{k-1} + p_{k-2}| &= \frac{1}{\l^2} \sum_{k=0}^\infty p_k | \l^2 - 2k\l + k(k-1)|\\
	&= \frac{1}{\l^2} \E | \l^2 - 2X\l + X(X-1)|\\
	&\leq \frac{1}{\l^2} \sqrt{ \E\left[ ( \l^2 - 2X\l + X(X-1))\right]^2 }\\
	&= \frac{\sqrt2}{\l},
\end{align*}
where the inequality is from H{\"o}lder's inequality. If $\max\{\l_1,\l_2\} < \frac{1}{\sqrt 2}$, then we achieve the constant bound in \Ref{seconddiff}, so assuming $\max\{\l_1,\l_2\} \geq \frac{1}{\sqrt 2}$, this gives
\begin{align*}
\| \Delta_{11}^2 h \| &\leq \int_0^\infty e^{-2t} \cdot \min\left\{2, \frac{\sqrt2}{\max\{\l_1, \l_2\} (1-e^{-t})}\right\} dt\\
&= \frac{1}{2\max\{\l_1,\l_2\}^2} + \frac{\sqrt2\log(\sqrt2 \max\{\l_1,\l_2\})}{\max\{\l_1,\l_2\}}.
%	&= \int_0^{-\log\left(1-\frac{1}{\l_1+\l_2}\right)}2e^{-2t} dt + \int^\infty_{-\log\left(1-\frac{1}{\l_1+\l_2}\right)}\frac{2e^{-2t}}{(\l_1 + \l_2) (1-e^{-t})} dt,
\end{align*}
\end{proof}

\begin{remark}\label{plusbound}
Noting that $\frac12(\l_1+\l_2) \leq \max\{ \l_1, \l_2\} \leq \l_1 + \l_2$, we can replace the maximum terms in \Ref{firstdiff} and \Ref{seconddiff} with the following more aesthetically pleasing but less sharp bounds.
\begin{align}
\| \Delta_i h \| &\leq \min \left\{ 1, \sqrt{ \frac{4}{e (\l_1+\l_2)}}\right\},\label{firstdiff2a}\\
\| \Delta_{ij}^2h\| &\leq \min \left\{ 1,  \frac{2}{(\l_1+\l_2)^2} + \frac{2\sqrt2\log^+(\sqrt2 (\l_1+\l_2)\})}{\l_1 + \l_2}\right\},\label{seconddiff2}
\end{align}
\end{remark}

\section{Applications}

We illustrate the use of our results on approximation by Skellam through two applications.  Each is a simple caricature of a more complicated application in which such approximation has been explored in the context of a specific real application.  The first pertains to the problem of subgraph counts in noisy networks, as introduced in~\cite{BKV14}, while the second relates to photon counting devices in image processing.

\subsection{Measurement errors in Erd\H{o}s-R\'enyi graph edge counts}

The analysis of network data is widespread across the scientific disciplines (e.g., \cite{J10,EDK09,N10}).  In applied network analysis, a common {\it modus operandi} is to (i) gather basic measurements relevant to the interactions among elements in a system of interest, (ii) construct a graph-based representation of that system, with nodes serving as elements and links indicating interactions between pairs of elements, and (iii) summarize the structure of the resulting graph using a variety of numerical and visual tools.  See~\cite[Chs 3 \& 4]{EDK09} for background and several case studies illustrating this process.  Key here is the point that the process of network analysis usually rests upon some collection of measurements of a more basic nature and there are usually errors inherent in those measurements.   Unfortunately, the uncertainty in approximating some true graph $G=(V,E)$ by some estimated graph $\hat{G}=(V,\hat{E})$, which manifests as errors in our knowledge of the presence/absence of edges between vertex pairs, must necessarily propagate to any estimates of network summaries $\eta(G)$ we seek.  Yet currently there is little in the literature by way of formal and principled statistical methodology for dealing with this propagation of error.  A natural first step in this direction is a distributional analysis.  

This problem was first formalized in~\cite{BKV14}, where the focus was on the distribution of subgraph count statistics in noisy networks.  And, since it is standard in the applied network analysis literature to cite observed subgraph counts, the quantity studied in~\cite{BKV14} was the discrepancy between observed and true subgraph counts.  Particular emphasis was placed on the simplest case where the subgraph of interest is an edge, and the corresponding subgraph count, the total number of edges.  The statistic of interest therefore was the discrepancy $D=|E| - |\hat{E}|$.  Accordingly, we consider the same statistic here, but in the specific case where the true underlying graph $G$ is a classical random graph. 

Formally, suppose that $G$ is an Erd\H{o}s-R\'enyi random graph with $n$ possible edges (i.e., for notational simplicity, $n$ refers to the number of vertex pairs rather than the number of vertices).  This graph is not necessarily a complete graph, but rather each vertex pair has an edge independently with probability $p_i$. We will denote by $U_i, i \in \{ 1,\ldots, n\}$, the indicator random variable such that $U_i=1$ if an edge exists between the $i$-th vertex pair. %For now lets use the simple assumption that $\Pr(U_i =1) = p$. %(The result can easily generalised to arbitrary $p_i$).

Motivated by the discussion above, suppose instead of observing the true graph $G$, we instead observe a version $\hat{G}$ with errors. Let $V_i, i \in \{ 1,\ldots, n\}$, be the associated edge indicator variable for the observed graph and furthermore set the conditionally independent error probabilities to be
\begin{align*}
\Pr(V_i = 0 | U_i = 1) &= r_i,\\
\Pr(V_i = 1 | U_i = 0) &= s_i.
\end{align*}
In this setup, let $U = \sum_{i=1}^n U_i$, $V = \sum_{i=1}^n V_i$, $V_i$ is independent of $U_j, j \neq i$. In this case, $U - V$ would therefore represent the difference in the number of edges of each graph. That is, $U-V = |E| - |\hat{E}|$.  We will aim to explicitly quantify the accuracy of a Skellam approximation for $U-V$.%Note a more general result where the probabilities $p, r, s$ can depend upon the edge $i$ can be easily derived using the same methodology below, we present only the simpler case for ease of reading.

The details of our problem statement differ slightly from that of~\cite{BKV14}, in that the true underlying graph $G$ is random, but the spirit remains the same, in that the discrepancy $D$ is the difference of two random variables $U$ and $V$ that are certainly not independent.  Furthermore, and a significant departure from~\cite{BKV14}, we do not require that $\E[U-V]=0$.  Leveraging the main result of this paper, we have the following.
%%%%%%%%%% The latest version! %%%%%%%%%%%%
\begin{theorem}
In the above setup, if we set $\l_1 = \sumin r_ip_i$ and $\l_2 = \sumin s_i(1-p_i)$, then
\begin{align}
 &d_{TV}(\mathcal{L}(U-V), \text{Sk}(\l_1,\l_2)) \notag\\
 &\leq \sumin (p_ir_i + (1-p_i)s_i)^2 \left[ \frac{2}{\left[\sumin (p_ir_i + (1-p_i)s_i)\right]^2} + \frac{ 2\sqrt 2 \log( \sqrt 2\sumin (p_ir_i + (1-p_i)s_i))}{\sumin (p_ir_i + (1-p_i)s_i)}\right]. \label{ex1bound}
 \end{align}
\end{theorem}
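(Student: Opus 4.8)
The plan is to recast $U-V$ as a difference of two sums of \emph{independent} indicators and then apply the bivariate Stein identity \Ref{Skellamsteineq} together with the second-difference bounds of Theorem~\ref{steinfactorthm}. For each $i$ I would introduce the mutually exclusive indicators $A_i = \ind\{U_i = 1, V_i = 0\}$ and $B_i = \ind\{U_i = 0, V_i = 1\}$, so that a check of the four possible values of $(U_i,V_i)$ gives $A_i - B_i = U_i - V_i$, hence $U - V = A - B$ with $A = \sumin A_i$ and $B = \sumin B_i$. Since the $U_i$ are independent and the $V_i$ are conditionally independent given the $U_i$, the pairs $(A_i,B_i)$ are independent across $i$; moreover $A_i B_i = 0$, while $\Pr(A_i=1) = p_i r_i =: a_i$ and $\Pr(B_i=1) = (1-p_i)s_i =: b_i$. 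The chosen parameters satisfy $\l_1 = \sumin a_i$ and $\l_2 = \sumin b_i$, so in particular $\l_1 + \l_2 = \sumin(p_i r_i + (1-p_i)s_i)$, which is the quantity appearing throughout \Ref{ex1bound}.

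Writing $h = h_f$ for $f \in \mathcal{F}_{TV}$, the Stein equation \Ref{Skellamsteineq} evaluated at $(A,B)$ gives $\E f(U-V) - \text{Sk}(\l_1,\l_2)\{f\} = \E[\mathcal A h(A,B)]$, so it suffices to bound $|\E[\mathcal A h(A,B)]|$ uniformly over $\mathcal{F}_{TV}$, which is precisely $d_{TV}(\mathcal L(U-V), \text{Sk}(\l_1,\l_2))$. The core step is a leave-one-out expansion. Writing the generator in difference form, $\mathcal A h(x,y) = \l_1 \Delta_1 h(x,y) - x\,\Delta_1 h(x-1,y) + \l_2 \Delta_2 h(x,y) - y\,\Delta_2 h(x,y-1)$, I would set $(A_{-i},B_{-i}) = \sum_{j\neq i}(A_j,B_j)$. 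Using $\l_1 = \sumin a_i$, the independence of $(A_i,B_i)$ from $(A_{-i},B_{-i})$, and crucially the fact that $A_i=1$ forces $B_i=0$ (so that $(A-1,B)=(A_{-i},B_{-i})$ on $\{A_i=1\}$), the pair of $\l_1$-terms collapses to $\sumin a_i\,\E[\Delta_1 h(A,B) - \Delta_1 h(A_{-i},B_{-i})]$; conditioning on $(A_i,B_i)$ then turns the inner first difference into a genuine second difference. Carrying this out for all four terms yields
\begin{align*}
\E[\mathcal A h(A,B)] = \sumin \E\big[ a_i^2\, \Delta_{11}^2 h(A_{-i},B_{-i}) + 2 a_i b_i\, \Delta_{12}^2 h(A_{-i},B_{-i}) + b_i^2\, \Delta_{22}^2 h(A_{-i},B_{-i}) \big].
\end{align*}
Taking absolute values and inserting the uniform bound for $\|\Delta_{ij}^2 h\|$ from Theorem~\ref{steinfactorthm} in the form \Ref{seconddiff2} of Remark~\ref{plusbound} gives $|\E[\mathcal A h(A,B)]| \le \sumin (a_i+b_i)^2\,\|\Delta_{ij}^2 h\|$; substituting $a_i+b_i = p_i r_i + (1-p_i)s_i$ and $\l_1+\l_2 = \sumin(p_i r_i + (1-p_i)s_i)$ produces \Ref{ex1bound}.

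The delicate point is the leave-one-out telescoping: one must verify that the calibration $\l_1 = \sumin a_i$, $\l_2 = \sumin b_i$ makes every first-order contribution cancel, leaving only second differences (this is also what frees us from requiring $\E[U-V]=0$). That cancellation relies essentially on the mutual exclusivity $A_i B_i = 0$, which is what lets conditioning on $\{A_i=1\}$ (resp. $\{B_i=1\}$) collapse the other coordinate onto its leave-one-out value; without the signed-Bernoulli structure this step would not close. The remaining estimates are routine once Theorem~\ref{steinfactorthm} is available, the only bookkeeping being that the mixed difference $\Delta_{12}^2 h$ satisfies the same bound as $\Delta_{11}^2 h$ and $\Delta_{22}^2 h$, so that the three coefficients combine into the perfect square $(a_i+b_i)^2$.
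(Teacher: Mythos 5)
Your proposal is correct and takes essentially the same route as the paper's own proof: your indicators $A_i, B_i$ are exactly the paper's false-negative/false-positive indicators $\hU_i, \hVi[i]$-style variables $\hU_i = \ind\{U_i=1,V_i=0\}$ and $\hV_i = \ind\{U_i=0,V_i=1\}$, and your leave-one-out telescoping with the mutual exclusivity $A_iB_i=0$ is precisely the paper's conditioning argument that reduces $\E\,\mathcal A h(\hU,\hV)$ to second differences with total weight $\sumin (p_ir_i + (1-p_i)s_i)^2$, finished off with the bound \Ref{seconddiff2} of Remark~\ref{plusbound}. The only cosmetic difference is that you record the intermediate identity as an explicit sum $\sumin \E\big[a_i^2\,\Delta_{11}^2 h + 2a_ib_i\,\Delta_{12}^2 h + b_i^2\,\Delta_{22}^2 h\big]$ at the leave-one-out point, where the paper argues by cases on $(U_i,V_i)$; the content is identical.
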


\begin{proof}
The first thing to note that is while we are trying to estimate the difference of $U$ and $V$, we do not need to consider edges that exist in both random graphs. Let $\hU$ denote the number of edges that are in the true graph but not the observed graph, and similarly let $\hV$ be the number of edges that are not in the true graph but are in the observed graph. In this fashion, $U - V = \hU - \hV$. (As an aside, we note that in~\cite{BKV14} the problem is necessarily formulated directly in terms of what we refer to as $\hU-\hV$, since there the true graph $G$ is assumed nonrandom.)  We similarly define $\hU_i$ and $\hV_i$ as indicators for individual edges, note that $\Pr(\hU_i = 1)=p_ir_i$ and $\Pr(\hV_i=1) = (1-p_i)s_i$. 
%%%%% VERSION 4: FINALLY A NOT SO BAD  ONE %%%%%%%%%%%
We are required to bound 
\begin{align}
&\E \mathcal A h(\hU,\hV)= \E \left[ \sum_{i=1}^n\left[ p_ir_i(h(\hU+1,\hV) - h(\hU,\hV))\right] + \hU(h(\hU-1,\hV) - h(\hU,\hV)) \right]\notag\\
	&\ \ \ + \E \left[ \sum_{i=1}^n\left[ (1-p_i)s_i(h(\hU,\hV+1) - h(\hU,\hV))\right] + \hV(h(\hU,\hV-1) - h(\hU,\hV)) \right].\label{ex1}
\end{align}
We begin with
\begin{align*}
\E& \left[  \hU(h(\hU-1,\hV) - h(\hU,\hV)) \right] = \E \sumin \hU_i(h(\hU-1,\hV) - h(\hU,\hV))\\
&=  \sumin\E \left[ \hU_i(h(\hU-1,\hV) - h(\hU,\hV)) \middle| \hU_i=1, \hV_i = 0\right] \Pr(\hU_i = 1, \hV_i = 0)\\
&= \sumin  p_ir_i \E \left[ h(\hUi, \hVi) - h(\hUi + 1, \hVi) \right] ,
\end{align*}
where $\hUi = \hU - \hU_i$ and $\hVi = \hV - \hV_i$. Hence the first half of~\Ref{ex1} becomes
\begin{align}
\sumin p_ir_i \E \left[ (h(\hU+1,\hV) - h(\hU,\hV)) +  (h(\hUi, \hVi) - h(\hUi + 1, \hVi)) \right]. \label{ex1a}
\end{align}
We now consider three cases: (i) $U_i = 0, V_i = 0$, (ii) $U_i = 1, V_i = 0$, and (iii) $U_i = 0, V_i = 1$. The second case can be termed a false negative, and the third, a false positive.  Note that it is impossible for an edge to be a false positive and false negative at the same time. In the first of these three cases, the terms in~\Ref{ex1a} will cancel out to 0, and in the latter two cases we get exactly a second difference of the function $h$, and these two cases take probability $p_i r_i$ and $(1-p_i)s_i$ respectively. Therefore, \Ref{ex1a} can be bounded by
\begin{align}
\|\Delta^2_{ij} h\| \cdot \sumin p_ir_i(p_ir_i + (1-p_i)s_i).\label{ex1b}
\end{align}
An analogous argument follows for the second half of~\Ref{ex1}, and therefore the entirety of~\Ref{ex1} can be bounded by
\begin{align*}
\|\Delta^2_{ij} h\| \cdot \sumin (p_ir_i + (1-p_i)s_i)^2,
\end{align*}
and the final bound follows from Theorem~\ref{steinfactorthm} and Remark~\ref{plusbound}.
\end{proof}

As a simplification to aid with interpretation of the bound, if we set $p_i = p, r_i =r$ and $s_i = s$, the bound becomes
\begin{align*}
\frac{2}{n} + (pr + (1-p)s) \cdot 2\sqrt2\log\left(\sqrt 2n(pr + (1-p)s)\right).
\end{align*}
The assumption that the error probabilities $r_i$ and $s_i$ are constant across the graph is referred to as a homogeneity assumption in~\cite{BKV14}.  While likely not strictly true in practice, it is a useful assumption for better illustrating how the relevant aspects of the problem combine to influence the accuracy of approximation by Skellam.  If we further assume that $\lambda_1$ and $\lambda_2$ are equal to some common value, say $\lambda$, our setup is then roughly equivalent to that in~\cite{BKV14}.  This assumption can be viewed as imposing a type of centering on the noise at the level of individual edges, since it dictates that in expectation we have $|E|$ equal to $|\hat{E}|$.  In this case, since $rp = s(1-p) = \lambda/n$, the bound becomes
\begin{align*}
\frac{1}{n}\left[ 2 + 4\sqrt2\lambda \log\left(2\sqrt2\lambda\right)\right] .
\end{align*}
When it is not unreasonable to expect that $\lambda$ vary with $n$, we then find that the accuracy of approximation by Skellam in this problem - for this special case - varies like $O\left(\lambda_n \log(\lambda_n) / n\right)$.

%We conjecture that in theory the bounds in the above theorem should not grow with $n$ in the same manner that similar results for Poisson approximation via Stein's method often do not contain $n$. The $n$ term ultimately comes about due to the existence of the logarithmic term, which as we discuss in the last section may unfortunately be unavoidable using the approach of this paper at least.  [EK:  KEPT THIS JUST IN CASE YOU STILL WANT A VERSION OF IT IN.]

The method of proof of this bound is unsurprisingly similar to Poisson approximation of the sum of independent but not necessarily identical Bernoulli trials. In our case, there are essentially three components of error terms that we would expect to appear: two of them will result from the individual Poisson approximations of $\hU$ and $\hV$ and then there should be a third term which deals with the fact that $\hU$ and $\hV$ are not independent. For readers familiar with Poisson approximation, you can see where the `third' component of the error appears in~\Ref{ex1b}. The difference arises because the conditioning we make upon $U_i$ has ramifications on $V_i$ as they are not independent. One would expect a single univariate Poisson approximation would only have a sum of the $p_i^2r_i^2$ in ~\Ref{ex1b}, but we require the second term in our scenario. However in some sense, this extra term disappears in the final bound because our Stein factor has both $\l_1$ and $\l_2$ in the denominator. 

%\begin{remark}
%Note that we are approximating the difference $U-V$, and this is different from multivariate Poisson approximation of the tuple $(U,V)$ as our choice of $\l_1, \l_2$ for our Skellam parameters are different from the means of $U$ and $V$. Hence we are in no way approximating the marginal distributions $U$ and $V$, but rather using finer properties of the difference between the two. 
%\end{remark}

\subsection{Haar wavelet coefficients under photon imaging with spillover effects}

Current state of the art in high-quality imaging applications, such as are encountered in medicine and scientific research, makes heavy use of what is known as a charge-coupled device (CCD).  A CCD converts electrical charges to digital values.  In the context of imaging, these electrical charges in turn derive from the conversion of photons -- essentially, particles of light -- into an electrical signal.  Therefore, CCDs (and a variety of other related devices) are central to modern image acquisition and digital image processing, in that by assembling arrays of CCDs and orienting them towards an object of interest it is possible to represent that object through a matrix of photon counts over the individual CCDs in the array.

Ideally, the count in each CCD would be independent of the others and relevant only to a certain corresponding portion of the imaged object.  However, for technical reasons, there can be various types of degradation.  For example, it typically is the case that photons that should be counted in a given CCD actually can be counted in others.  This effect is sometimes referred to as `spillover' and can be thought of as inducing a type of blurring in the image.  Standard practice is to calibrate imaging instruments before use, yielding a (usually) probabilistic mapping function that characterizes the blurring.   Depending on the extent of such degradation and the application at hand, this may be used in turn for deblurring in the image processing stage.  See, for example, \cite{BKS13,MFRS13} for a detailed description of this paradigm in the context of X-ray imaging in astrophysics.  

Here we set up a simple caricature of the type of image degradation problem just described, in which a weak dependence among photon counts results.  Without loss of generality, we consider a one-dimensional signal rather than a two-dimensional image.  In practice, the indexing in this dimension is typically photon energy, rather than photon source location.  But the same types of degradation issues can be present.  For our signal processing, we consider the use of wavelets, a work-horse in signal and image processing for over $20$ years now~\cite{M08}.  Specifically, both for simplicity and to match most closely the focus of this paper, we consider the use of the Haar wavelet.  The result of applying a Haar wavelet transform to a one-dimensional signal is to produce a collection of Haar coefficients which, as the inner product of the wavelet and the signal, are proportional to the difference of the sums of the signal values over two adjacent windows.  

Suppose we had $n$ bins (e.g., corresponding to CCDs), and note in the following that all defined vectors will be of length $n$. Let the vector $\tX$ be the true signal and suppose $\tX \ed \Po(\tf)$, so the $X_i \ed \Po(f_i)$ and are also independent from each other. It has been shown~\cite{HW12} that both the wavelet and scaling coefficients for the Haar wavelet are distributed as (proportional to) Skellam random variables with parameters comprised of sums and differences of the elements of $\tf$. Set $\tP$ where $P_i \in \{0,1\}$ denote the positive inclusions for a given Haar wavelet coefficient, similarly $\tN$ with $N_i \in \{0,1\}$, for the negative inclusions and $\tP + \tN = \{0,1\}^n$, that is there is no overlap of 1's. Then the Haar wavelet coefficient can be represented as $U-V$ where $U = \tP \cdot \tX$, $V = \tN \cdot \tX$ and $\cdot$ denotes the dot product. Furthermore, $U-V \ed \Sk(\tP \cdot \tf, \tN \cdot \tf)$. In the following we will investigate how measurement errors would impact the distribution of these coefficients.

A simple variant of the type of spillover referred to above, in the context of a one-dimensional signal, is when a particle may actually end up being observed at a lower energy level than its true energy. In our model we will assume that  each particle that arrives is independent and there is a probability $p$ that the particle will be observed in exactly one level lower than its true energy. Let $Y_i$ denote the number of particles in bin $i$ that were observed correctly, and $Y_i^*$ denote the number of particles in bin $i$ that were the result of errors in measurement. That is $Y_i^*$ is the number of particles of energy level $i+1$ but were measured at level $i$.

Due to the thinning property of Poisson random variables, $\tY$ and $\tY^*$ are independent. Set $U' = \tP \cdot \tY + \tP \cdot \tY^*$ and $V' = \tN \cdot \tY + \tN \cdot \tY^*$.  The observed Haar wavelet coefficient satisfies
\[ U'-V' \ed \text{Sk} \left( (1-p)\tP \cdot \tf + p \tP\cdot \tf^{(-1)}, (1-p)\tN \cdot \tf + p \tN \cdot \tf^{(-1)} \right),\]
where $f^{(-1)}_i = f_{i+1}$. Note that we can set $P_{-1}=N_{-1}=0$ and $f_{n+1}=0$ to avoid boundary issues. So our question is, what is the difference between these two different Skellam distributions, i.e., between the distributions of the true and observed Haar wavelet coefficients.

\begin{theorem}
In the above set up,
\[ d_{TV}(\mathcal{L}(U'-V'), \mathcal{L}(U-V)) \leq \sqrt{\frac{2p^2}{e \max(\tP\cdot\tf, \tN\cdot\tf)}} \left[ | \tP \cdot \tf - \tP \cdot \tf^{(-1)}| + | \tN \cdot \tf - \tN \cdot \tf^{(-1)}|\right]. \]
\end{theorem}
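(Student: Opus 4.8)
The plan is to exploit the fact that both laws in the statement are Skellam, so the problem collapses to bounding the total variation distance between two Skellam distributions whose parameters differ by a small, explicitly computable perturbation, and then to run this through the Stein factors of Theorem~\ref{steinfactorthm}. Writing $\l_1 = \tP\cdot\tf$, $\l_2 = \tN\cdot\tf$ for the true parameters and $\l_1' = (1-p)\tP\cdot\tf + p\tP\cdot\tf^{(-1)}$, $\l_2' = (1-p)\tN\cdot\tf + p\tN\cdot\tf^{(-1)}$ for the observed ones, I would first record the elementary identities $\l_1 - \l_1' = p(\tP\cdot\tf - \tP\cdot\tf^{(-1)})$ and $\l_2 - \l_2' = p(\tN\cdot\tf - \tN\cdot\tf^{(-1)})$. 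These already account for the factor $p$ and the two absolute-value terms in the stated bound, so the remaining task is to prove $d_{TV}(\text{Sk}(\l_1',\l_2'), \text{Sk}(\l_1,\l_2)) \le \|\Delta_1 h\|\,|\l_1-\l_1'| + \|\Delta_2 h\|\,|\l_2 - \l_2'|$ and then invoke~\Ref{firstdiff}.

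To obtain this inequality I would take the \emph{target} distribution to be $\text{Sk}(\l_1,\l_2)$, so that the solution $h_f$ and its Stein factors are governed by the true parameters; this is what ultimately produces $\max(\tP\cdot\tf,\tN\cdot\tf)$ in the denominator. For any $f \in \mathcal{F}_{TV}$, since $U'-V' \ed \text{Sk}(\l_1',\l_2')$ depends only on a difference, I may represent it through independent $X' \ed \Po(\l_1')$ and $Y' \ed \Po(\l_2')$, whence $\E f(U'-V') - \text{Sk}(\l_1,\l_2)\{f\} = \E\, \mathcal A h_f(X',Y')$ by the Stein equation~\Ref{Skellamsteineq}.

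The key algebraic step is to expand $\mathcal A h_f(X',Y')$ via~\Ref{2dimgen} and eliminate the two ``death'' terms using the Poisson Stein identity $\E[X' g(X',Y')] = \l_1' \E[g(X'+1,Y')]$ together with its analogue in $Y'$. Under this identity the terms $x[h(x-1,y)-h(x,y)]$ and $y[h(x,y-1)-h(x,y)]$ become $-\l_1'\,\E\Delta_1 h(X',Y')$ and $-\l_2'\,\E\Delta_2 h(X',Y')$, which combine with the immigration terms $\l_1\,\E\Delta_1 h$ and $\l_2\,\E\Delta_2 h$ to collapse the whole expectation to $(\l_1-\l_1')\,\E\Delta_1 h(X',Y') + (\l_2-\l_2')\,\E\Delta_2 h(X',Y')$. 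Bounding each expectation by its sup-norm Stein factor, applying~\Ref{firstdiff}, and substituting the parameter differences from the first paragraph then yields exactly the claimed bound.

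I do not anticipate a serious obstacle; the one point requiring care is that the Stein factor $\|\Delta_i h\|$ is tied to the target parameters $(\l_1,\l_2)$, whereas the expectation is taken under the observed law $(\l_1',\l_2')$. This is harmless precisely because $\|\Delta_i h\|$ is a uniform supremum bound over all states, so $|\E\Delta_i h(X',Y')| \le \|\Delta_i h\|$ regardless of the law of $(X',Y')$. Choosing the target to be the true distribution rather than the observed one is exactly what makes $\max(\tP\cdot\tf,\tN\cdot\tf)$ appear in the final statement in place of its primed counterpart.
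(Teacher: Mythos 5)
Your proposal is correct and follows essentially the same route as the paper: both take the true law $\mathrm{Sk}(\tP\cdot\tf,\,\tN\cdot\tf)$ as the Stein target, apply the univariate Poisson Stein identity to the independent observed counts to reduce $\E\,\mathcal{A}h(U',V')$ to $(\l_1-\l_1')\,\E\Delta_1 h + (\l_2-\l_2')\,\E\Delta_2 h$ with $\l_i-\l_i'$ equal to $p$ times the window discrepancies, and then bound via \Ref{firstdiff}. Your explicit remark that the uniform Stein factor applies regardless of the law of $(X',Y')$ is a point the paper leaves implicit, but the argument is the same.
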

\begin{proof}
To bound this difference in total variation, we use a simple adaptation of Theorem~1.C part (i) from~\cite{BHJ92}. Using the true distribution of $U-V$ as our `reference' measure, we need to bound $| \E \mathcal A(U',V')|$ from~\Ref{2dimgen} where $\l_1 = \tP \cdot \tf$ and $\l_2 = \tN \cdot \tf$. Note that using the usual Poisson Stein identity,
\[ \E\left[ U' [ h(U'-1, V') - h(U',V')] | V'\right] =  -\left((1-p)\tP \cdot \tf + p \tP\cdot \tf^{(-1)}\right)\E [h(U'+1,V') - h(U',V')| V'],\]
therefore to bound the first half of~\Ref{2dimgen}, 
\begin{align*}
|\E &\left[ \tP \cdot \tf  [h(U'+1,V') - h(U',V')]  +  U' [ h(U'-1, V') - h(U',V')]\right]|\\
	& \  \ \ = |\left((\tP - (1-p)\tP) \cdot \tf - p \tP\cdot \tf^{(-1)}\right) \E[\E [h(U'+1,V') - h(U',V')|V']]|\\
	&\ \ \ \leq \|\Delta_i h\| \ p\ |\tP\cdot \tf - \tP \cdot \tf^{(-1)}|. 
\end{align*}
An analogous bound can be derived for the second half~\Ref{2dimgen} and this yields the final result.
\end{proof}

Note that the bound in the above theorem is larger when, relative to the larger of total signal intensity in the positive or negative window (i.e., the larger of $\tP\cdot\tf$ or $\tN\cdot\tf$), the discrepancy in those totals resulting from a shift of the windows by one is large.  That is, when the windows are near a spike or jump in the underlying signal $\tf$.  Therefore, in particular, the effects of spillover are minimal in regions of the signal that are smooth.

\begin{remark}
If we wished to generalise this result to allow the error probability to be random, for example the error rate for bin $i$ could depend upon $X_i$, this should in theory be possible by adapting Theorem~1.C part (ii) from~\cite{BHJ92}.
\end{remark}

\section{Discussion}
There is one notable drawback in the approach used in this paper. Given our approach is to project from two dimensions to one using appropriate test functions, this will only be applicable when approximating the difference of two random variables. If one wishes to approximate a single univariate random variable with the Skellam distribution directly, then this approach will not be useful. It remains open whether a direct one dimensional approach is possible.

Poisson approximation via the generator method involves characterising the Stein identity as the generator of an immigration-death Markov process where the immigration rate is constant $\lambda$ and the death rate is unit per capita. Such a generator characterises the Poisson distribution as it is the unique stationary distribution of such a process. Intuitively, for the Skellam distribution one would aim to construct a generator defined on the integers such that $\l_1$ would denote the rate of increase of `positive particles', $\l_2$ the rate of increase of `negative particles', and then an offsetting death-type rate that would remove particles appropriately, thus ensuring the process does not explode in either direction so that the associated stationary distribution is Skellam. The problem with attempting such a construction, from a one dimensional viewpoint, is that if we only knew the difference between the two counts of positive and negative particles, this is not enough information to properly define the transition rates of the process. For example, if we knew that the difference of the two counts was 0, there are infinitely many possibilities for the number of positive and negative particles, and to properly define the process we need to know how many positive and negative particles there are. The problem described above with constructing an appropriate one dimensional generator for the process is what leads us to believe that a one dimensional approach is not possible using the generator method, however we concede that it is possible that there may exist a generator representation that would be amenable to analysis.

An interesting question is whether there exists a nice clean bound for the first difference of $h$ of the order $\frac{1}{\sqrt{\l_1 + \l_2}}$ as opposed to our two bounds in \Ref{firstdiff}, \Ref{firstdiff2}. Our bound~\Ref{firstdiff2} is derived via the inversion formula for characteristic functions. The `usual' method that is used in Poisson approximation does not seem viable in the Skellam scenario, primarily because it involves finding a uniform bound for the maximum of the Poisson mass function in terms of $\l$. For the Skellam distribution, one might suspect an analogous approach, however given we have one quantity to bound but two parameters to work with, this method seems unfruitful. We expect that it should be possible to find such a bound, and this remains an interesting open problem to solve.

Similarly for the second difference our bound involving the maximum of $\l_1$ and $\l_2$ should be able to have all the maximum terms replaced with the sum of the two parameters without the penalty invoked in Remark~\ref{plusbound}. The correct way to derive such a bound would be to condition upon the difference $Z_0^{\l_1}(t)- Z_0^{\l_2}(t)$ in~\Ref{part2} rather than just one of the two processes. This would ultimately require a bound upon the sum of the absolute second differences of Skellam probabilities. For $p_k = \Pr(Z=k)$ where $Z \ed \text{Sk}(\l_1,\l_2)$, numerical results indicate that $\sum_k |p_k - 2p_{k-1} + p_k| \leq \frac{1}{\l_1 + \l_2}$, which intuitively makes sense given the Poisson bound, as both $\l_1$ and $\l_2$ will `flatten' out the mass function as they increase. However we were unable to prove such a result, as the Bessel functions proved to be not very tractable.  

It is worth comparing our bound for the second difference to the Stein solution~\Ref{seconddiff} to the corresponding bounds derived in Theorem~4 in~\cite{BKV14}, where it was shown $\|\Delta^2_{11} h \| \leq \frac{160}{2\l}$, but limited to the case where $\l = \l_1 = \l_2$ and for the Kolmogorov metric. However, given the test functions for total variation distance and Kolmogorov distance are not completely dissimilar, a comparison is still worthwhile. Our bound in this paper is of order $\frac{\log(\l_1 + \l_2)}{\l_1 + \l_2}$, so for very large $\l_1, \l_2$ this will fare worse. However our constant is much better so this will only be worse on very large $\l_1, \l_2$. And, obviously, our bounds have the significant added flexibility of not requiring $\l_1 = \l_2$.

In light of the bounds of order $\frac{1}{\l_1 + \l_2}$ in~\cite{BKV14}, an interesting question is whether our bounds in this paper of $\frac{\log(\l_1 + \l_2)}{\l_1 + \l_2}$ are of the right order. Given that \cite{B05} has shown that for multivariate Poisson approximation the Stein factors are of strict order $\frac{\log(\l_1 + \l_2)}{\l_1 + \l_2}$, and our approach involves adapting bivariate Poisson approximation on specific test functions we believe that our order may be the best possible using our approach. 

Using the generator approach, the standard bound for the Poisson Stein factor, see Corollary~2.12 of~\cite{BX01} for example, involves a coupling based upon hitting times of an immigration-death process. However this coupling is difficult to use in the multivariate case as hitting times become significantly more complicated when there are multiple dimensions. Logarithmic terms are quite common in Poisson related approximation theory, such as multivariate Poisson as discussed above, and also for process approximation where it has been shown that logarithmic terms are strictly necessary if we wish to use uniform bounds for the Stein factors~\cite{BX95}. Our approach in this paper has both aspects of multivariate and univariate analysis, multivariate in the sense that we are essentially considering a special case of bivariate Poisson approximation, but the ultimate target is Skellam which is univariate. As a result, it is not clear what the correct order should be. We would lean towards the correct order not including a logarithmic term, but such a bound is likely beyond the methods used in this paper. Whether a direct analytic, or alternative approach would yield a better result is unknown.

\subsection*{Acknowledgments}
We would like to thank two anonymous referees for their careful reading of this manuscript and their helpful comments and suggestions. This work was supported in part by AFOSR award 12RSL042. We would also like to thank Nathan Ross for pointing out a slightly sharper bound for the second differences of Poisson probabilities used in the proof for Theorem~\ref{steinfactorthm}.

\bibliographystyle{apt}
\bibliography{Skellam}{}

\end{document}